\documentclass[12pt]{article}
\usepackage[top=3.2cm,bottom=3.2cm,left=2.5cm,right=2.5cm]{geometry}
\usepackage{amssymb}
\usepackage{amsmath,amsthm}
\usepackage[latin1]{inputenc}
\usepackage[dvips]{graphicx}
\usepackage{hyperref}
\usepackage{color}
\usepackage{enumerate}
\usepackage{enumitem}
\usepackage{tikz}
\usepackage{xifthen}
\usepackage{verbatim}
\hypersetup{colorlinks=true, linkcolor=blue, citecolor=blue,urlcolor=blue}

\newtheorem{remark}{Remark}[section]

\newtheorem{lemma}[remark]{Lemma}
\newtheorem{theorem}[remark]{Theorem}
\newtheorem{proposition}[remark]{Proposition}

\newtheorem{corollary}[remark]{Corollary}

\title{Independent domination in central graphs}
\author{Abel Cabrera-Mart\'inez$^{1}$, Jos\'e Luis L\'opez-Carmona$^{1}$\\[5pt]
Ismael Rios-Villamar$^{2}$, Alejandro Serrano-D\'iaz$^{1}$\\[15pt]
{\small $^{1}$ Universidad de C\'ordoba, Departamento de Matem\'aticas, Campus de Rabanales, 14071,} \\
{\small C\'ordoba, Spain (acmartinez@uco.es, 2locaj@uco.es, aserrano1@uco.es)}\\[7pt]
{\small $^{2}$ Universidad Aut\'onoma de Guerrero,  Facultad de  Matem\'{a}ticas, Carlos E. Adame 54,}\\
{\small  Col. La Garita 39650, Acapulco, Guerrero, Mexico (18305783@uagro.mx)}\\
}

\date{ }
\begin{document}
\maketitle

\begin{abstract}
Let $G$ be a graph with vertex set $V(G)$. A set $I\subseteq V(G)$ is an independent dominating set of $G$ if no two vertices in $I$ are adjacent and every vertex in $V(G)\setminus I$ is adjacent to at least one vertex in $I$. The independent domination number of $G$ is the minimum cardinality among all independent dominating sets of $G$.  The aim of this article is to obtain tight bounds and closed formulas for the independent domination number of central graphs. The results are expressed in terms of parameters of the original graph from which the central graph is constructed. 
\end{abstract}

\noindent
{\it Keywords}: Independent domination, domination, central graph.

\noindent
{\it Math. Subj. Class. (2020)}: 05C69, 05C76.

\section{Introduction}

Let $G$ be a graph with vertex set $V(G)$ of order $n=|V(G)|$ and edge set $E(G)$ of size $m=|E(G)|$. Let $V(G)=\{v_1, \ldots , v_n\}$ and $V_E(G)=\{v^{i,j} : v_iv_j\in E(G)\}$.
The \emph{central graph} of a nontrivial connected graph $G$, denoted by $\mathtt{C}(G)$, is the graph with vertex set  $V(\mathtt{C}(G))=V(G)\cup V_E(G)$ and edge set
$E(\mathtt{C}(G))=\{v_iv^{i,j},v_jv^{i,j}: v^{i,j}\in V_E(G)\}\cup \{v_iv_j: v_iv_j\notin E(G)\}$.
In Figure~\ref{fig-central} we show a graph $G$ and its corresponding central graph~$\mathtt{C}(G)$.
Given a graph $G$, we denote the central graph of $\mathtt{C}(G)$ by $\mathtt{C}^2(G)$ instead of $\mathtt{C}(\mathtt{C}(G))$.

\begin{figure}[ht]
\centering
\begin{tikzpicture}[scale=.44, transform shape]
	
\node [draw, shape=circle] (a1) at  (6.5,-5.5) {};
\node at (5.9,-5.4) {\LARGE $v_1$};
\node [draw, shape=circle, fill=black] (a2) at  (6.5,-7.5) {};
\node at (5.9,-7.3) {\LARGE $v_2$};
\node [draw, shape=circle] (a3) at  (5,-9.5) {};
\node at (4.4,-9.4) {\LARGE $v_3$};
\node [draw, shape=circle] (a4) at  (8,-9.5) {};
\node at (8.6,-9.4) {\LARGE $v_4$};
\node [draw, shape=circle, fill=black] (a5) at  (6.5,-11.5) {};		
\node at (5.9,-11.5) {\LARGE $v_5$};

\node at (6.5,-12.7) {\LARGE $G$};

\draw(a1)--(a2)--(a3)--(a4)--(a2);
\draw(a3)--(a5)--(a4);

\node [draw, shape=circle] (ccc1) at  (15.5,-5.5) {};
\node at (14.9,-5.4) {\LARGE $v_1$};
\node [draw, shape=circle] (ccc2) at  (15.5,-7.5) {};
\node at (15.5,-7) {\LARGE $v_2$};
\node [draw, shape=circle, fill=black] (ccc3) at  (14,-9.5) {};
\node at (13.4,-9.4) {\LARGE $v_3$};
\node [draw, shape=circle, fill=black] (ccc4) at  (17,-9.5) {};
\node at (16.4,-9.4) {\LARGE $v_4$};
\node [draw, shape=circle, fill=black] (ccc5) at  (15.5,-11.5) {};
\node at (14.9,-11.5) {\LARGE $v_5$};

\node [draw, shape=circle, fill=black] (ddd1) at  (22.5,-4) {};
\node at (23.3,-3.7) {\LARGE $v^{1,2}$};
\node [draw, shape=circle] (ddd2) at  (21,-6) {};
\node at (21.8,-5.7) {\LARGE $v^{2,3}$};
\node [draw, shape=circle] (ddd3) at  (22.5,-8) {};
\node at (23.3,-7.7) {\LARGE $v^{3,4}$};
\node [draw, shape=circle] (ddd4) at  (24,-6) {};
\node at (24.8,-5.7) {\LARGE $v^{2,4}$};

\node [draw, shape=circle] (ddd5) at  (21,-10) {};
\node at (21.8,-9.7) {\LARGE $v^{4,5}$};
\node [draw, shape=circle] (ddd6) at  (24,-10) {};
\node at (24.8,-9.7) {\LARGE $v^{3,5}$};

\node at (19,-12.7) {\LARGE $\mathtt{C}(G)$};

\draw (ccc3) to[bend left=15] (ccc1);
\draw (ccc4) to[bend right=15] (ccc1);
\draw (ccc5) to[bend left=25] (ccc1);
\draw(ccc5)--(ccc2); 

\draw (ccc1)--(ddd1)--(ccc2)--(ddd2)--(ccc3);
\draw (ccc2) to[bend right=10] (ddd4);
\draw (ccc3) to[bend left=10] (ddd3);
\draw (ddd4)--(ccc4)--(ddd3);
\draw(ccc5)--(ddd5)--(ccc4);	
\draw (ccc5) to[bend right=10] (ddd6);	
\draw (ccc3) to[bend right=15] (ddd6);
\end{tikzpicture}
\caption{A graph $G$, and the corresponding central graph $\mathtt{C}(G)$.}\label{fig-central}
\end{figure} 

\vspace{.1cm}

\noindent
The central graph $\mathtt{C}(G)$ of a graph $G$ was introduced in \cite{Vernold} and has since attracted the attention of numerous researchers. One of the main problems in the study of $\mathtt{C}(G)$ is to determine exact values or tight bounds for certain parameters and to express them in terms of known invariants of $G$. In particular, we cite the following works on domination
theory in this graph operator: domination \cite{BFKB2026}, total domination \cite{KM2019}, double domination \cite{CRS2026} and double total domination \cite{RCRS2026}. In this article, we focus on the independent domination in central graphs.

\vspace{.1cm}

\noindent
A set of vertices $I\subseteq V(G)$ of a connected graph $G$ is called an \emph{independent dominating set} of $G$ if no two vertices in $I$ are adjacent and every vertex in $V(G)\setminus I$ is adjacent to at least one vertex in $I$. Let $\mathcal{ID}(G)$ be the set of independent dominating sets of $G$. The \emph{independent domination number} of $G$, denoted by $i(G)$, is the minimum cardinality among all sets in $\mathcal{ID}(G)$, that is, $i(G)=\min\{|I|:\, I\in \mathcal{ID}(G)\}.$ 
An $i(G)$-\emph{set} is defined as a set $I\in \mathcal{ID}(G)$ of cardinality $|I|=i(G)$. For each of the graphs given in Figure~\ref{fig-central}, the set of black vertices describes an independent dominating set of minimum cardinality. This concept and its notation were first introduced by Cockayne and Hedetniemi \cite{ind-1,ind-2}. Recent results on independent domination can be found in \cite{IndDom-2022,IndDom-2019,Cabrera2023,CR2024,CCP2023,IndDom-2017,IndDom-2022-1}.

\vspace{.1cm}

\noindent
The aim of this article is to obtain tight bounds and closed formulas for the independent domination number of central graphs. The results are expressed in terms of parameters of the original graph from which the central graph is constructed. 

\subsection{Notation and terminology}
Throughout the article, we use the following notation and terminology. Let $G$ be a finite, undirected, and simple graph. For each vertex $v_i\in V(G)$, let $N_G(v_i)$ denote the \emph{open neighborhood} of $v_i$ in $G$. The \emph{minimum degree} of $G$ is defined as $\delta(G)=\min\{|N_G(v_i)|: v_i\in V(G)\}$, and the \emph{maximum degree} of $G$ as $\Delta(G)=\max\{|N_G(v_i)|: v_i\in V(G)\}$. We denote
by $\overline{G}$ the complement of $G$. For any subset $X\subseteq V(G)$, let $G-X$ denote the graph obtained from $G$ by deleting all vertices in $X$ together with their incident edges. 

\vspace{.1cm}

\noindent
For two sets $X,Y\subseteq V(G)$, let $E_G(X,Y)$ denote the set of edges in $G$ joining a vertex of $X$ to a vertex of $Y$. For any subset $X\subseteq V(G)$, define $P_G(X) = \{v_i \in V(G)\setminus X:\ N_G(v_i) = X\}$, and we further define the function $\delta(P_G(X))$ by 
$$\delta(P_{G}(X)) = 
\begin{cases}
1 & \text{if } P_{G}(X) \neq \emptyset, \\[4pt]
0 & \text{otherwise.}
\end{cases}$$	

\noindent
The \emph{independence number} of $G$, denoted by $\alpha(G)$, is the maximum cardinality among all sets in $\mathcal{ID}(G)$, that is, $\alpha(G)=\max\{|I|:\, I\in \mathcal{ID}(G)\}.$ 
An $\alpha(G)$-\emph{set} is defined as a set $I\in \mathcal{ID}(G)$ of cardinality $|I|=\alpha(G)$.
A \emph{clique} is a complete subgraph of $G$. In particular, we consider a single vertex to form a clique of order one. Observe that every independent set of $G$ induces
a clique in $\overline{G}$ and vice versa. Let $\mathcal{C}l(G)$ denote the family of subsets of $V(G)$ that induce a clique in $G$. The \emph{clique number} of $G$, denoted by $\omega(G)$, is the maximum order among all cliques in $G$, that is, $\omega(G)=\max\{|J|:\, J\in \mathcal{C}l(G)\}$. An $\omega(G)$-\emph{set} is defined as a set $J\in \mathcal{C}l(G)$ of cardinality $|J|=\omega(G)$. A clique of order three is called a \emph{triangle} and a graph is \emph{triangle-free} if it contains no triangle. 

\vspace{.1cm}

\noindent
We denote by $K_n$, $N_n$, $W_n$, $P_n$, $C_n$ and $K_{r,n-r}$ (with $r\in \{1, \ldots ,\left\lfloor n/2\right\rfloor\}$) the complete graph, empty graph, wheel graph, path graph, cycle graph and complete bipartite graph of order $n$, respectively. A double star is a tree of diameter exactly three, and we let $\mathcal{DS}_n$ denote the family of all double stars of order $n$. Given a graph $G$, the graph $K_1+G$ is the graph with vertex set $V(K_1+G)=V(K_1)\cup V(G)$ and edge set $E(K_1+G)=E(G)\cup \{v_iv_j: v_i\in V(K_1), v_j\in V(G)\}$.

\section{Results}

We begin this section providing a closed formula for the independent domination number of $\mathtt{C}(G)$. 
	
\begin{theorem}\label{theo-i-C(G)}
Let $G$ be a connected graph of order $n\geq 3$. Then
$$i (\mathtt{C}(G)) = \min \{ |X| + |E(G-X)| + \delta(P_{G}(X)): X\in \mathcal{C}l(G)\}.$$
\end{theorem}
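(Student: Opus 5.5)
The proof is a two-way inequality. The starting point is the structure of an independent set $I$ of $\mathtt{C}(G)$. Two original vertices $v_i,v_j$ are adjacent in $\mathtt{C}(G)$ exactly when $v_iv_j\notin E(G)$. Hence $X:=I\cap V(G)$ is independent in $\mathtt{C}(G)$ if and only if $X\in\mathcal{C}l(G)$. Moreover, a subdivision vertex $v^{i,j}$ can lie in $I$ only if neither $v_i$ nor $v_j$ is in $X$, that is, only if $v_iv_j\in E(G-X)$. The plan is to fix $X\in\mathcal{C}l(G)$ and find the minimum size of an independent dominating set $I$ of $\mathtt{C}(G)$ with $I\cap V(G)=X$. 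Minimising over $X$ then gives the formula.

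\textbf{Domination of subdivision vertices.} A vertex $v^{i,j}$ is dominated by $I$ exactly when $v^{i,j}\in I$ or one of $v_i,v_j$ lies in $X$. So every $v^{i,j}$ with $v_iv_j\in E(G-X)$ must itself belong to $I$. Edges with an endpoint in $X$ are already dominated. The subdivision vertices of $E(G-X)$ are pairwise nonadjacent and not adjacent to $X$. Hence $I_0:=X\cup\{v^{i,j}: v_iv_j\in E(G-X)\}$ is forced, and it is independent.

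\textbf{Domination of original vertices.} Take $v_k\notin X$. If $v_k$ has a non-neighbour in $X$ (in $G$), it is dominated in $\mathtt{C}(G)$. If $v_k$ has a $G$-neighbour outside $X$, it is dominated by the corresponding subdivision vertex in $I_0$. So $v_k$ fails to be dominated by $I_0$ exactly when $N_G(v_k)\subseteq X$ and $X\subseteq N_G(v_k)$, i.e.\ $N_G(v_k)=X$, i.e.\ $v_k\in P_G(X)$.

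If $P_G(X)\neq\emptyset$, I expect that one extra vertex suffices, and this is the main obstacle. The natural choice is to add a single vertex $v_k\in P_G(X)$. It is nonadjacent to $X$ in $\mathtt{C}(G)$, because $X\subseteq N_G(v_k)$. It is nonadjacent to the subdivision vertices of $E(G-X)$, because $v_k$ has no edges outside $X$. Two vertices of $P_G(X)$ have the same neighbourhood $X$, so they are nonadjacent in $G$ and therefore adjacent in $\mathtt{C}(G)$. Thus $v_k$ dominates all of $P_G(X)$. There is a degenerate case, $X=\emptyset$, where $P_G(\emptyset)$ consists of isolated vertices; this cannot occur since $G$ is connected with $n\ge 3$. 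If $X$ is empty, $I_0$ is just all subdivision vertices, which dominate everything since $\delta(G)\ge1$, consistent with $\delta(P_G(\emptyset))=0$.

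\textbf{Lower bound.} Let $I$ be any independent dominating set and put $X=I\cap V(G)$. By the forcing argument, $I\supseteq I_0$. If $P_G(X)\neq\emptyset$, then some $v_k\in P_G(X)$ is not dominated by $I_0$. The only vertices of $\mathtt{C}(G)$ that can dominate it are original vertices outside $X$ (excluded by the definition of $X$), subdivision vertices on edges $v_kv_l$ with $v_l\in X$ (not independent from $X$), or $v_k$ itself (not in $I$ by the definition of $X$). The first two options fail, and the third is impossible once $I\cap V(G)=X$. So $P_G(X)$ must in fact be empty.

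This means the correct accounting sets $X'=I\cap V(G)$, which may include a vertex of $P_G(X)$. I will handle this by comparing clique choices. Suppose $I\cap V(G)=X'$ contains some $v_k$ with $N_G(v_k)=X'\setminus\{v_k\}$. Note that $X'$ is a clique, and set $Y=X'\setminus\{v_k\}$. Then $|I|\ge |X'|+|E(G-X')|=|Y|+1+|E(G-Y)|$, since removing $v_k$ adds no edges to $G-Y$ because $N_G(v_k)\subseteq Y$. This quantity is at least the formula value at $Y$, which yields the lower bound. If no such $v_k$ exists, then $|I|\ge |I_0|$ and $P_G(X')=\emptyset$ (otherwise some vertex would be undominated), so $|I|$ is at least the formula value at $X'$.

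The upper bound is the construction given above. The delicate point to verify carefully is this reindexing between $X$ and $X\cup\{v_k\}$.
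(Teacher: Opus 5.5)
Your argument follows the paper's route. The upper bound is exactly the paper's set $S\cup V_E(G-S)\cup\{v_1\}$ with $v_1\in P_G(S)$. For the lower bound you take $X=I\cap V(G)$ and show that $X$ is a clique of $G$, that exactly the subdivision vertices of $E(G-X)$ lie in $I$, and that $P_G(X)=\emptyset$ because a vertex with $N_G(v_k)=X$ would have no dominator. That is precisely the paper's accounting $|D|=|D'|+|E(G-D')|+\delta(P_G(D'))$.

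This observation already finishes the lower bound, so your later ``reindexing'' and Case A are unnecessary. The sentence saying $X'$ ``may include a vertex of $P_G(X)$'' is muddled: $X'=X$, vertices of $P_G(X)$ lie outside $X$ by definition, and you have just shown $P_G(X)=\emptyset$. It is harmless, but you can delete it.

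The one real omission is the case $I\cap V(G)=\emptyset$, that is, $I=V_E(G)$ and $|I|=m$. There you compare $|I|$ with the value of the formula at $X=\emptyset$. In the paper, however, a clique has order at least one, so $\emptyset\notin\mathcal{C}l(G)$ and that comparison is not available.

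The paper spends a paragraph on exactly this point. It chooses an $i(\mathtt{C}(G))$-set $D$ maximizing $|D\cap V(G)|$. It then rules out $D\cap V(G)=\emptyset$ by exhibiting, for an $\omega(G)$-set $B$, the independent dominating set $B\cup V_E(G-B)$ of size at most $m$.

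You need one line of this kind. For example, since $G$ is connected with $n\geq 3$, some vertex $v$ has $|N_G(v)|\geq 2$. Then $|\{v\}|+|E(G-\{v\})|+\delta(P_G(\{v\}))\leq 1+(m-2)+1=m$. So the minimum over nonempty cliques is at most $m$, and the case $I=V_E(G)$ is covered. If one instead allows $\emptyset\in\mathcal{C}l(G)$, your proof is complete as written, and the same line shows that both readings of the formula give the same value.
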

	
\begin{proof}
Let $S\in \mathcal{C}l(G)$ be a set for which $|S| + |E(G-S)| + \delta(P_{G}(X))$ is minimum among all sets in $\mathcal{C}l(G)$. If $P_{G}(S)\neq \emptyset$, assume without loss of generality that $v_1\in P_{G}(S)$. We now define a set $S'\subseteq V(\mathtt{C}(G))$ as follows.
$$S' = 
\begin{cases}
S \cup V_E(G-S) \cup \{v_1\} & \text{if } P_{G}(S) \neq \emptyset, \\[4pt]
S \cup V_E(G-S) & \text{otherwise.}
\end{cases}$$
It is straightforward to verify that $S'\in \mathcal{ID}(\mathtt{C}(G))$. Hence, 
\begin{equation}\label{eq-1}
\begin{split}
i(\mathtt{C}(G)) \leq |S'|&=|S|+ |V_E(G-S)| + \delta(P_{G}(X)) \\[4pt]
&= \min \{ |X| + |E(G-X)| + \delta(P_{G}(X)): X\in \mathcal{C}l(G)\}.
\end{split}
\end{equation}

\noindent
Now, let $D$ be an $i(\mathtt{C}(G))$-set such that $|D\cap V(G)|$ is maximum among all $i(\mathtt{C}(G))$-sets, and let $D'=D\cap V(G)$. Suppose that $D'=\emptyset$. This implies that $D=V_E(G)$. Let $B$ be an $\omega(G)$-set and define $B'=B\cup V_E(G-B)$. It is straightforward to verify that $B'\in \mathcal{ID}(\mathtt{C}(G))$. Hence, 
\begin{equation*}
i(\mathtt{C}(G))\leq |B'|=|B|+|V_E(G-B)|\leq |E(G)|=|V_E(G)|=i(\mathtt{C}(G)).
\end{equation*}

\noindent
Thus, $B'$ is an $i(\mathtt{C}(G))$-set. Moreover, $|B'\cap V(G)|=|B|=\omega(G)>0=|D'|$, which contradicts the maximality of $|D'|$. Therefore, $D'\neq \emptyset$.
Now, observe that $D'\in \mathcal{C}l(G)$, $|D\cap V_E(G)| = |E(G-D')|$ and $P_{G}(D')=\emptyset$. Hence, 
\begin{equation}\label{eq-2}
\begin{split}
i(\mathtt{C}(G)) &= |D\cap V(G)| + |D\cap V_E(G)| \\[3pt]
&= |D'| + |E(G-D')| + \delta(P_{G}(D')) \\[3pt]
&\geq \min \{ |X| + |E(G-X)| + \delta(P_{G}(X)): X\in \mathcal{C}l(G)\}.
\end{split}
\end{equation}
From \eqref{eq-1} and \eqref{eq-2} we obtain the next desired equality.
$$i (\mathtt{C}(G)) = \min \{ |X| + |E(G-X)| + \delta(P_{G}(X)): X\in \mathcal{C}l(G)\}.$$
Therefore, the proof is complete.
\end{proof}

\noindent
Determining the value of $i (\mathtt{C}(G))$ directly from its definition can be difficult in general. However, the formula established in Theorem~\ref{theo-i-C(G)} allows us to derive bounds and, in several cases, exact values for this parameter. The next results illustrate some applications of this formula.

\begin{theorem}\label{theo-triangle-free}
Let $G$ be a connected triangle-free graph of size $m\geq 2$. Then
$$i(\mathtt{C}(G)) = m+3 - \max \{|N_G(v_i)| + |N_G(v_j)|:\ v_iv_j \in E(G)\}.$$
\end{theorem}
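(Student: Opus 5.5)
We apply Theorem~\ref{theo-i-C(G)} directly. Since $G$ is triangle-free, every $X\in\mathcal{C}l(G)$ is either a single vertex $\{v_i\}$ or an edge $\{v_i,v_j\}$ with $v_iv_j\in E(G)$. So it suffices to compute $f(X)=|X|+|E(G-X)|+\delta(P_G(X))$ for these two kinds of sets and compare the minima.

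\emph{Edges.} For $X=\{v_i,v_j\}$ with $v_iv_j\in E(G)$, triangle-freeness means no edge is counted twice among the edges incident to $v_i$ or $v_j$, apart from $v_iv_j$ itself. Hence
\[
|E(G-X)| = m-|N_G(v_i)|-|N_G(v_j)|+1.
\]
Moreover, $P_G(X)=\emptyset$: a vertex adjacent to both $v_i$ and $v_j$ would form a triangle. Therefore
\[
f(X)=m+3-|N_G(v_i)|-|N_G(v_j)|,
\]
and minimizing over edges gives exactly the right-hand side of the statement.

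\emph{Single vertices.} For $X=\{v_i\}$ we have $f(X)=1+m-|N_G(v_i)|+\delta(P_G(\{v_i\}))$. We must show this is at least $m+3-\max\{|N_G(v_k)|+|N_G(v_l)|:v_kv_l\in E(G)\}$. It suffices to find a neighbour $v_j$ of $v_i$ with
\[
|N_G(v_j)|\ge 2-\delta(P_G(\{v_i\})),
\]
since then $f(\{v_i\})\ge f(\{v_i,v_j\})$. Consider two cases.
\begin{enumerate}
\item If $P_G(\{v_i\})\neq\emptyset$, then $v_i$ has a neighbour (a leaf), and any neighbour has degree at least $1$, which is all we need.
\item If $P_G(\{v_i\})=\emptyset$, then no neighbour of $v_i$ is a leaf, i.e.\ every neighbour of $v_i$ has degree at least $2$. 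Since $G$ is connected with $m\ge 2$, $v_i$ has at least one neighbour, so such a $v_j$ exists.
\end{enumerate}

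Combining the two cases, the overall minimum in Theorem~\ref{theo-i-C(G)} is attained on an edge, which yields the formula. The main point needing care is the single-vertex case, specifically the role of the indicator $\delta(P_G(\{v_i\}))$. The hypothesis $m\ge 2$ (so $n\ge 3$, as Theorem~\ref{theo-i-C(G)} requires) guarantees that $G$ is not $K_2$, and connectivity guarantees that $v_i$ has a neighbour.
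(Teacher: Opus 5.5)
Your proof is correct and has the same skeleton as the paper's. Both apply Theorem~\ref{theo-i-C(G)}, observe that every clique has at most two vertices, and evaluate the edge cliques exactly as $m+3-|N_G(v_i)|-|N_G(v_j)|$.

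The difference is in the single-vertex cliques. The paper first checks $G\cong K_{1,n-1}$ directly. For non-stars it discards the indicator, bounds the value of $\{v_i\}$ below by $1+m-\Delta(G)$, and then uses the global fact that a connected non-star has an edge with degree sum at least $\Delta(G)+2$.

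You instead keep $\delta(P_G(\{v_i\}))$ and compare $\{v_i\}$ with a suitable incident edge. If $v_i$ has a pendant neighbour, the indicator supplies the missing unit; otherwise every neighbour of $v_i$ has degree at least $2$. This local comparison $f(\{v_i\})\ge f(\{v_i,v_j\})$ covers the centre of a star, which is exactly where the indicator matters, so no separate case is needed. It also makes explicit that the minimum in Theorem~\ref{theo-i-C(G)} is always attained on an edge.

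A minor point: the count $|E(G-\{v_i,v_j\})|=m-|N_G(v_i)|-|N_G(v_j)|+1$ holds in any simple graph, because $v_iv_j$ is the only edge incident to both endpoints. Triangle-freeness is used only for $|X|\le 2$ and for $P_G(\{v_i,v_j\})=\emptyset$.
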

	
\begin{proof}
If $G \cong K_{1,n-1}$, then it is straightforward to verify that 
\begin{equation*}
i(\mathtt{C}(K_{1,n-1})) = 2 = m + 3  - \max \{|N_{K_{1,n-1}}(v_i)| + |N_{K_{1,n-1}}(v_j)|:\ v_iv_j \in E(K_{1,n-1})\}.
\end{equation*}
From now on assume that $G \ncong K_{1,n-1}$. Let $v_kv_l\in E(G)$ be an edge which satisfies that $|N_G(v_k)|+ |N_G (v_l)| = \max \{|N_G(v_i)| + |N_G(v_j)|:\ v_iv_j \in E(G)\}$ and let $S = \{v_k,v_l\}$. Since $G$ is triangle-free, it follows that $\delta(P_{G}(S)) = 0$. Moreover, we have that $|E(G-S)| = m - (|N_G(v_k)| + |N_G (v_l)|) + 1$. Since $S\in \mathcal{C}l(G)$, it follows from Theorem~\ref{theo-i-C(G)} that
\begin{equation*}
\begin{split}
i(\mathtt{C}(G)) &= \min \{ |X| + |E(G-X)| + \delta(P_{G}(X)) :\ X\in \mathcal{C}l(G)\} \\[4pt]
&\leq |S| + |E(G-S)| + \delta(P_{G}(S)) \\[4pt]
&= 2+m-   (|N_G(v_k)| + |N_G (v_l)|)+1 \\[4pt]
&= m+3 - \max \{|N_G(v_i)| + |N_G(v_j)|:\ v_iv_j \in E(G)\}.
\end{split}
\end{equation*}
We only need to prove that $i(\mathtt{C}(G)) \geq m+3 - \max \{|N_G(v_i)| + |N_G(v_j)|:\ v_iv_j \in E(G)\}$. By Theorem~\ref{theo-i-C(G)} there exists $S' \in \mathcal{C}l(G)$ such that $i(\mathtt{C}(G)) = |S'| + |E(G-S')| + \delta(P_{G}(S'))$. Since $G$ is triangle-free, we have that $|S'| \leq 2$. We now consider the following two complementary cases.

\vspace{.2cm}

\noindent
Case 1: $|S'| = 1$. In this case, the minimality of $i(\mathtt{C}(G))$ implies that $|E(G-S')| = m - \Delta(G)$. Moreover, since $G\not\cong K_{1,n-1}$, it follows that $\Delta(G)\leq \max \{|N_G(v_i)| + |N_G(v_j)|:\ v_iv_j \in E(G)\}-2$. Hence, 
\begin{equation*}
\begin{split} 
i(\mathtt{C}(G)) &= |S'| + |E(G-S')| + \delta(P_{G}(S')) \\[4pt]
&\geq 1 + m-\Delta(G) \\[4pt]
&\geq m + 3 - \max \{|N_G(v_i)| + |N_G(v_j)|:\ v_iv_j \in E(G)\}.
\end{split}
\end{equation*}

\vspace{.2cm}	

\noindent
Case 2: $|S'| =2$. In this case, the minimality of $i(\mathtt{C}(G))$ implies that $|E(G-S')| = m - \max \{|N_G(v_i)| + |N_G(v_j)|:\ v_iv_j \in E(G)\} + 1$. Since $G$ is triangle-free, it follows that  $\delta(P_{G}(S')) = 0$. Hence,
\begin{equation*}
\begin{split}
i(\mathtt{C}(G)) & = |S'| + |E(G-S')| + \delta(P_{G}(S')) \\[4pt]
& = m+3 - \max \{|N_G(v_i)| + |N_G(v_j)|:\ v_iv_j \in E(G)\}.
\end{split}
\end{equation*}

\vspace{.2cm}

\noindent
From the previous two cases, we obtain that $i(\mathtt{C}(G)) \geq m+3 - \max \{|N_G(v_i)| + |N_G(v_j)|:\ v_iv_j \in E(G)\}$, which completes the proof.
\end{proof}

\noindent
An immediate consequence of the preceding theorem is the following corollary, which provides the exact value for $i(\mathtt{C}(G))$ when $G$ is a path, a cycle, or a complete bipartite graph.

\begin{corollary}\label{cor-Cn}
The following equalities hold for any integer $n\geq 4$.
\begin{enumerate}[label={\rm(\roman*)}]
%\item \label{cor-1} $i(\mathtt{C}(P_n)) = n-2$.
\item \label{cor-2} $i(\mathtt{C}(C_n)) = i(\mathtt{C}(P_n))+1=n-1$.
\item \label{cor-3} $i(\mathtt{C}(K_{r,n-r})) = r(n-r)-n+3$.
\end{enumerate}
\end{corollary}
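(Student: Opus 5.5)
The plan is to apply Theorem~\ref{theo-triangle-free} directly. For $n\geq 4$, the graphs $C_n$, $P_n$ and $K_{r,n-r}$ are connected, triangle-free and of size at least $2$, so the theorem applies to each. The only work is to compute the size $m$ and the quantity $M(G)=\max\{|N_G(v_i)|+|N_G(v_j)|:\ v_iv_j\in E(G)\}$ for each graph, and then substitute into $m+3-M(G)$.

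For the cycle, every vertex has degree $2$. Hence $m=n$ and $M(C_n)=4$, so $i(\mathtt{C}(C_n))=n+3-4=n-1$.

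For the path, $m=n-1$. Since $n\geq 4$, there is an edge joining two interior vertices, each of degree $2$. No edge can do better, because all degrees are at most $2$. So $M(P_n)=4$ and $i(\mathtt{C}(P_n))=n-1+3-4=n-2$. This gives both equalities in (i): $i(\mathtt{C}(C_n))=n-1=i(\mathtt{C}(P_n))+1$.

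For $K_{r,n-r}$, each edge joins a vertex of degree $n-r$ to a vertex of degree $r$. Thus every edge has degree sum exactly $n$, so $M=n$, while $m=r(n-r)$. Therefore $i(\mathtt{C}(K_{r,n-r}))=r(n-r)+3-n$, which is (ii).

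There is no real obstacle here. The one point needing care is the hypothesis $n\geq 4$. It is what guarantees $m\geq 2$ for the complete bipartite graph (e.g.\ $K_{1,n-1}$ has $n-1\geq 3$ edges). It also guarantees that $P_n$ has an edge between two vertices of degree $2$, namely the edge $v_2v_3$. For $n=3$ that edge fails to exist and $M(P_3)$ would be $3$.
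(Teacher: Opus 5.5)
Your proposal is correct and takes the same route as the paper, which presents this corollary as an immediate consequence of Theorem~\ref{theo-triangle-free}. Your computations of the maximum edge degree-sum ($4$ for $C_n$ and $P_n$ when $n\geq 4$, and $n$ for $K_{r,n-r}$), together with the checks of the theorem's hypotheses, are exactly what that application requires.
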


\noindent
The following result provides tight lower and upper bounds for the independent domination number of the 
central graph $\mathtt{C}(G)$ in terms of the order and size of $G$.

\begin{theorem}
Let $G$ be a connected graph of order $n\geq 3$ and size $m$. Then the following statements hold.
\begin{enumerate}[label={\rm(\roman*)}]
\item \label{theo-1} $1+\left\lceil m/n\right\rceil\leq i(\mathtt{C}(G))\leq m$.
\item \label{theo-2} $i(\mathtt{C}(G))=2$ if and only if $G\in \{C_3,K_{1,n-1}\}\cup \mathcal{DS}_n$.
\item \label{theo-3} $i(\mathtt{C}(G)) =m$ if and only if $G\cong P_3$.
\end{enumerate}
\end{theorem}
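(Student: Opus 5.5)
Everything will be derived from the formula of Theorem~\ref{theo-i-C(G)}: $i(\mathtt{C}(G))=\min\{|X|+|E(G-X)|+\delta(P_G(X)) : X\in\mathcal{C}l(G)\}$.

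\emph{Item (i).} For the upper bound, take $X=\{v\}$ with $|N_G(v)|=\Delta(G)$. Then $|E(G-X)|=m-\Delta(G)$ and $\delta(P_G(X))\le 1$, so $i(\mathtt{C}(G))\le 2+m-\Delta(G)\le m$ whenever $\Delta(G)\ge 2$. Since $G$ is connected with $n\ge3$, we always have $\Delta(G)\ge 2$.

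For the lower bound, let $X$ be a clique with $|X|=k$. Each vertex of $X$ has at most $\Delta(G)$ incident edges, so $|E(G-X)|\ge m-k\Delta(G)+\binom{k}{2}$. I would rather argue directly with the clique. The edges not in $G-X$ are those incident to $X$, and there are at most $\sum_{x\in X}|N_G(x)|-\binom{k}{2}$ of them. I then need $k+|E(G-X)|+\delta(P_G(X))\ge 1+\lceil m/n\rceil$, which is the main obstacle. Edges of $G-X$ are at least $m-\sum_{x\in X}(|N_G(x)|)+\binom k2$, and this sum can be large, so a cruder route may work better. Every edge of $G$ either lies in $G-X$ or meets $X$, and an edge meeting $X$ has an endpoint in $X$ and another among $n$ vertices, giving $|E(G)\setminus E(G-X)|\le k(n-1)$. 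Hence $k+|E(G-X)|\ge k+m-k(n-1)$, which is too weak when $k$ is large. The correct handling is: if $k\ge 1+\lceil m/n\rceil$ we are done immediately. Otherwise $k\le\lceil m/n\rceil$, and I would bound the edges not meeting $X$ more carefully, say $|E(G-X)|\ge m-k(n-1)+\binom k2$. I would check whether $k+m-k(n-k)\ge 1+\lceil m/n\rceil$ can fail. It fails, for example, for a star, where the minimum is $2$ and $m/n<1$ gives $1+\lceil m/n\rceil=2$. So tightness comes from small $m$, and I expect the real argument to need the $\delta(P_G(X))$ term or a case split on $m\le n$ versus $m>n$. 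As a first attempt I would prove $k+|E(G-X)|\ge 1+m/n$ through the inequality $|E(G-X)|\ge m-\sum_{x\in X}|N_G(x)|+\binom{k}{2}$ combined with $|N_G(x)|\le n-1$, and optimize over $k$.

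\emph{Item (ii).} Value $2$ forces either $|X|=1$, or $|X|=2$ with $E(G-X)=\emptyset$ and $P_G(X)=\emptyset$; value $1$ is impossible because $X=\{v\}$ with $E(G-v)=\emptyset$ makes $G$ a star and then $P_G(X)\ne\emptyset$. I would enumerate the cases:
\begin{itemize}
\item $|X|=1$ with $|E(G-X)|=1$ and $P_G(X)=\emptyset$: this gives a star plus one edge, which is impossible for $n\ge4$ since leaves would lie in $P_G(X)$, leaving $C_3$.
\item $|X|=1$ with $|E(G-X)|=0$ and $P_G(X)\ne\emptyset$: the star $K_{1,n-1}$.
\item $|X|=2$, an edge $uv$ covering all edges, with no common neighbour of degree $2$ adjacent only to $u,v$: a double star, or $C_3$ again.
\end{itemize}
The converse is a direct check.

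\emph{Item (iii).} From the upper bound $2+m-\Delta(G)\le m$, equality requires $\Delta(G)=2$ together with $P_G(\{v\})\neq\emptyset$ for every maximum-degree vertex, so $G$ is a path or a cycle. Corollary~\ref{cor-Cn} together with small cases then singles out $P_3$.
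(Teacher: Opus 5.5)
\textbf{The gap: the lower bound in (i).} Your upper bound in (i) is correct. It differs from the paper, which simply notes that $V_E(G)$ is itself an independent dominating set of $\mathtt{C}(G)$; you use the clique $\{v\}$ with $|N_G(v)|=\Delta(G)\ge 2$. Your (ii) and (iii) follow the paper's case analysis.

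The genuine gap is the lower bound $i(\mathtt{C}(G))\ge 1+\lceil m/n\rceil$. You never prove it, and the ``first attempt'' you end with cannot work. Bounding $|N_G(x)|\le n-1$ and dropping $\delta(P_G(X))$ gives, already at $k=1$, the value $1+m-(n-1)=m-n+2$. For every tree this equals $1<2=1+\lceil m/n\rceil$. Since the $k=1$ term is already below the target, no optimisation over $k$ can rescue the estimate. The losses are the two you suspected: replacing $|N_G(x)|$ by $n-1$, and discarding $\delta(P_G(X))$ (for a star it is exactly this term that produces the value $2$).

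\textbf{The missing idea.} The clique formula is not needed here; a plain counting (Berge) bound suffices, and this is what the paper uses. In $\mathtt{C}(G)$ every $v_i$ has degree exactly $n-1$: it is adjacent to $v^{i,j}$ for each of its $|N_G(v_i)|$ neighbours and to each of its $n-1-|N_G(v_i)|$ non-neighbours. Every $v^{i,j}$ has degree $2\le n-1$. So every closed neighbourhood in $\mathtt{C}(G)$ has at most $n$ vertices. Any dominating set $D$ of the $n+m$ vertices therefore satisfies $n|D|\ge n+m$, which gives $i(\mathtt{C}(G))\ge\lceil (n+m)/n\rceil=1+\lceil m/n\rceil$.

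If you want to stay with the formula, you need the case split you anticipated. Write $k=|X|$, $e=|E(G-X)|$, $\delta=\delta(P_G(X))$; by integrality it suffices to show $n(k+e+\delta)\ge n+m$.
\begin{itemize}
\item If $e\ge 1$ or $\delta=1$, the estimate $m\le e+k(n-1)-\binom{k}{2}$ already gives this.
\item If $e=0$ and $\delta=0$, every vertex outside $X$ has at most $k-1$ neighbours, all in $X$. Hence $m\le\binom{k}{2}+(n-k)(k-1)\le (k-1)n$, so $k\ge 1+m/n$.
\end{itemize}

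\textbf{Minor slips in (ii) and (iii).} None of these affects the conclusions, since all graphs involved lie in the family.
\begin{itemize}
\item In (ii), the case $|X|=1$, $|E(G-X)|=1$ yields $P_3$ as well as $C_3$.
\item In (ii), the case $|X|=2$ yields stars and double stars but never $C_3$: for $X$ an edge of $C_3$, the third vertex lies in $P_G(X)$.
\item In (ii), your direct exclusion of the value $1$ is good, since it keeps (ii) independent of the missing bound.
\item In (iii), your observation that $P_G(\{v\})\neq\emptyset$ for every maximum-degree vertex already excludes all cycles and all paths except $P_3$ and $P_4$. Since $P_4$ is a double star, (ii) finishes the argument without the corollary.
\item In (iii), you should also record the converse check $i(\mathtt{C}(P_3))=2=m$.
\end{itemize}
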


\begin{proof}
Observe that $V_E(G)\in \mathcal{ID}(\mathtt{C}(G))$. Hence, $i(\mathtt{C}(G))\leq |V_E(G)|=m$. In \cite{Berge1973}, Berge proved that $i(H)\geq \left\lceil |V(H)|/(1+\Delta(H))\right\rceil$ for any graph~$H$. Since $|V(\mathtt{C}(G))|=n+m$ and $\Delta(\mathtt{C}(G))=n-1$, it follows that $i(\mathtt{C}(G))\geq \left\lceil (n+m)/n\right\rceil=1+\left\lceil m/n\right\rceil$, which completes the proof of \ref{theo-1}.

\vspace{.1cm}

\noindent
Now, we proceed to prove \ref{theo-2}. It is straightforward to check that $i(\mathtt{C}(G))=2$ for every graph $G\in \{C_3,K_{1,n-1}\}\cup \mathcal{DS}_n$. Conversely, let $G$ be a connected graph satisfying that $i(\mathtt{C}(G))=2$. By Theorem~\ref{theo-i-C(G)}, there exists $S\in \mathcal{C}l(G)$ such that  
$2=i(\mathtt{C}(G))=|S|+|E(G-S)|+\delta(P_{G}(S))$. This implies that $|S|\in \{1,2\}$. We next analyze the following cases.

\vspace{.1cm}

\noindent
Case 1: $|S|=1$. In this case, it follows that $|E(G-S)|\leq 1$. If $|E(G-S)|=1$, then $|V(G-S)|=2$, which implies that $n=3$, and hence $G\in \{P_3,C_3\}$. From now on, assume that $E(G-S)=\emptyset$. Thus, $G-S\cong N_{n-1}$. Since $G$ is connected, it follows that $G\cong K_1+N_{n-1}\cong K_{1,n-1}$.

\vspace{.1cm}

\noindent
Case 2: $|S|=2$. In this case, it follows that $E(G-S)=\emptyset$. Thus, $G-S\cong N_{n-2}$. In addition, we have that $\delta(P_{G}(S))=0$. This implies that $|N_G(v_i)\cap S|=1$ for every vertex $v_i\in V(G)\setminus S$. As a consequence, $G\in \{K_{1,n-1}\}\cup \mathcal{DS}_n$.

\vspace{.1cm}

\noindent
From the previous two cases we conclude that $G\in \{C_3,K_{1,n-1}\}\cup \mathcal{DS}_n$, which completes the proof of \ref{theo-2}. 

\vspace{.1cm}

\noindent
Finally, we proceed to prove \ref{theo-3}. It is straightforward to check that $i(\mathtt{C}(P_3))=2=|E(P_3)|$. Conversely, let $G$ be a connected graph satisfying that $i(\mathtt{C}(G))=m$. Let $v_k\in V(G)$ be a vertex such that $|N_G(v_k)|=\Delta(G)$. Since $\{v_k\}\in \mathcal{C}l(G)$, it follows by Theorem~\ref{theo-i-C(G)} that
$m=i(\mathtt{C}(G))\leq |\{v_k\}|+|E(G-\{v_k\})|+\delta(P_{G}(\{v_k\}))\leq 2+m-\Delta(G)$. Hence, $\Delta(G)=2$, which implies that $G\in \{P_n,C_n\}$. If $n\geq 4$, then by Corollary~\ref{cor-Cn}~\ref{cor-2} it follows that $i(\mathtt{C}(G))\neq m$, a contradiction. Thus, $n=3$. Moreover, \ref{theo-2} leads to $G\not\cong C_3$. Therefore, $G\cong P_3$, which completes the proof of \ref{theo-3}.
\end{proof}

\noindent
The following theorem provides lower and upper bounds for the independent domination number of $\mathtt{C}(G))$ in terms of the size, clique number, maximum degree and minimum degree of $G$. We first establish the following useful lemma.

\begin{lemma}\label{lem-f}
Let $a\geq 2$ be a positive integer and let $f: [1,a] \to \mathbb{R}$ be the function defined by $f(x) = \frac{x^2-x(2a-1)}{2}$. Then the following statements hold.
\begin{enumerate}[label={\rm(\roman*)}]
\item \label{lem-f1} $f(a) = f(a-1)$.
\item \label{lem-f2} $f$ is strictly decreasing on $[1,a-1]$.
\end{enumerate}
\end{lemma}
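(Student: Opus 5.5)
Both statements follow from direct computation with the explicit quadratic $f(x)=\frac{x^2-x(2a-1)}{2}$. No earlier result of the paper is needed.

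For \ref{lem-f1}, I substitute the two points and compare:
\[
f(a)=\frac{a^2-a(2a-1)}{2}=\frac{-a^2+a}{2}=-\frac{a(a-1)}{2},
\]
\[
f(a-1)=\frac{(a-1)^2-(a-1)(2a-1)}{2}=\frac{(a-1)\bigl((a-1)-(2a-1)\bigr)}{2}=\frac{(a-1)(-a)}{2}.
\]
The two values coincide, which proves \ref{lem-f1}.

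For \ref{lem-f2}, I use the derivative $f'(x)=x-\frac{2a-1}{2}=x-a+\frac12$. For every $x\in[1,a-1]$ we have $f'(x)\le -\frac12<0$, so $f$ is strictly decreasing on $[1,a-1]$. The hypothesis $a\ge 2$ only ensures that this interval is nonempty; when $a=2$ it is the single point $\{1\}$ and the claim is vacuous.

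As an alternative without calculus, I can write the difference of two values directly:
\[
f(y)-f(x)=\frac{(y-x)\bigl(x+y-(2a-1)\bigr)}{2}.
\]
If $1\le x<y\le a-1$, then $x+y<2a-2<2a-1$, so this difference is negative.

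None of the steps is a real obstacle. The only point that needs care is placing the vertex of the parabola at $a-\frac12$. This placement explains both claims: $a-1$ and $a$ lie symmetrically around the vertex, and $[1,a-1]$ lies entirely to its left, where the parabola decreases.
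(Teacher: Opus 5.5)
Your proposal is correct and matches the paper's proof: part (i) by direct substitution (which the paper calls straightforward), and part (ii) via $f'(x)=x-a+\tfrac12<0$ on $[1,a-1]$. Your factored-difference argument is a valid calculus-free alternative, but the main route is the same as the paper's.
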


\begin{proof}
It is straightforward to verify that \ref{lem-f1} holds. We now proceed to prove \ref{lem-f2}. Observe that $f'(x) = x-a+1/2$. Let $x\in[1, a-1]$. Since $x\leq a-1 < a - 1/2$, it follows that $f'(x)<0$. Hence, $f$ is strictly decreasing on $[1,a-1]$, which completes the proof.
\end{proof}

\begin{theorem}\label{theo-lower-upper-bounds}
Let $G$ be a connected graph of size $m\geq 2$ which is not a complete graph. Then
$$m- \frac{\omega(G)(2\Delta(G)-\omega(G)-1)}{2}\leq i(\mathtt{C}(G))\leq m- \frac{\omega(G)(2\delta(G)-\omega(G)-1)}{2}.$$
\end{theorem}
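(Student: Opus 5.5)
The plan is to work directly with the closed formula of Theorem~\ref{theo-i-C(G)}. First I would rewrite $|E(G-X)|$ for a clique $X\in\mathcal{C}l(G)$ with $|X|=k$. Every edge incident with $X$ is counted in $\sum_{v_i\in X}|N_G(v_i)|$, and the $k(k-1)/2$ edges inside $X$ are counted twice. Hence
$$|E(G-X)| = m-\sum_{v_i\in X}|N_G(v_i)|+\frac{k(k-1)}{2},$$
and the quantity minimized in Theorem~\ref{theo-i-C(G)} becomes
$$|X|+|E(G-X)|+\delta(P_G(X)) = m-\sum_{v_i\in X}|N_G(v_i)|+\frac{k(k+1)}{2}+\delta(P_G(X)).$$
Both bounds come from estimating the degree sum by $k\Delta(G)$ or by $k\delta(G)$.

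For the upper bound, I would take $X$ to be an $\omega(G)$-set. Then $\sum_{v_i\in X}|N_G(v_i)|\ge \omega(G)\delta(G)$. Moreover $P_G(X)=\emptyset$: a vertex $v$ with $N_G(v)=X$ would make $X\cup\{v\}$ a clique of order $\omega(G)+1$, which is impossible. So $\delta(P_G(X))=0$. Substituting gives
$$i(\mathtt{C}(G))\le m-\omega(G)\delta(G)+\frac{\omega(G)(\omega(G)+1)}{2}=m-\frac{\omega(G)(2\delta(G)-\omega(G)-1)}{2},$$
as required.

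For the lower bound, take the clique $X$ with $|X|=k$ attaining the minimum in Theorem~\ref{theo-i-C(G)}, so $1\le k\le\omega(G)$. Using $\delta(P_G(X))\ge 0$ and $\sum_{v_i\in X}|N_G(v_i)|\le k\Delta(G)$, I get
$$i(\mathtt{C}(G))\ge m+\frac{k^2-k(2\Delta(G)-1)}{2}=m+f(k),$$
where $f$ is the function of Lemma~\ref{lem-f} with $a=\Delta(G)$. The hypothesis $a\ge 2$ holds because $G$ is connected with $m\ge 2$. The key point is that $k\le\omega(G)\le\Delta(G)$. Indeed, in a connected graph a clique on $\Delta(G)+1$ vertices has no edges leaving it (each of its vertices already has degree $\Delta(G)$ inside it), so it is the whole graph, and $G$ would be complete, which is excluded.

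The step I expect to be the main obstacle is showing $f(k)\ge f(\omega(G))$ for every $k\in\{1,\dots,\omega(G)\}$, since $f$ is not monotone on the whole interval $[1,\Delta(G)]$. This is exactly what Lemma~\ref{lem-f} provides. If $\omega(G)\le\Delta(G)-1$, then $f$ is strictly decreasing on $[1,\omega(G)]$ by Lemma~\ref{lem-f}\ref{lem-f2}, so $f(k)\ge f(\omega(G))$. If $\omega(G)=\Delta(G)$, then for $k\le\Delta(G)-1$ the same monotonicity gives $f(k)\ge f(\Delta(G)-1)$, and $f(\Delta(G)-1)=f(\Delta(G))=f(\omega(G))$ by Lemma~\ref{lem-f}\ref{lem-f1}; the case $k=\omega(G)$ is trivial. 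Therefore
$$i(\mathtt{C}(G))\ge m+f(\omega(G))=m-\frac{\omega(G)(2\Delta(G)-\omega(G)-1)}{2},$$
which completes the proof.
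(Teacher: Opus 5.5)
Your proof is correct and follows the paper's argument essentially step for step. It uses the same edge-counting identity for $|E(G-X)|$, a maximum clique with $\delta(P_G(X))=0$ for the upper bound, and the minimizing clique together with Lemma~\ref{lem-f} (split into the cases $\omega(G)\le\Delta(G)-1$ and $\omega(G)=\Delta(G)$) for the lower bound. You also justify three points the paper only asserts: why $\omega(G)\le\Delta(G)$ when $G$ is connected and not complete, why $P_G(X)=\emptyset$ for a maximum clique $X$, and why $\Delta(G)\ge 2$ so that the lemma applies.
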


\begin{proof}
Let $B$ be an $\omega(G)$-set. The following equalities follow from a counting argument on the number of edges in $G-B$.
\begin{equation}\label{eq-33-upper}
\begin{split}
|E(G-B)| &= m - |E_G(B,B)| - |E_G(B, V(G) \setminus B)| \\[4pt]
&= m - \sum_{v \in B}|N_G(v)|+\frac{\omega(G)(\omega(G)-1)}{2}.
\end{split}
\end{equation}
By definition, $\delta(P_{G}(B)) = 0$. Moreover, since $\sum_{v \in B} |N_G(v)| \geq \omega(G)\delta(G)$, it follows from Theorem~\ref{theo-i-C(G)} and \eqref{eq-33-upper} that		
\begin{equation*}\label{eq-4-upper}
\begin{split}
i(\mathtt{C}(G))&\leq |B| + |E(G-B)| + \delta(P_{G}(B))\\[4pt] 
&=  \omega(G) + m - \sum_{v \in B} |N_G(v)| + \omega(G)(\omega(G)-1)/2  \\[4pt] 
&\leq \omega(G) + m - \omega(G)\delta(G) +\omega(G)(\omega(G)-1)/2 \\[5pt] 
&= m - \frac{\omega(G) ( 2\delta(G) - \omega(G) - 1)}{2},
\end{split}
\end{equation*}
which completes the proof of the upper bound. Finally, we proceed to prove the lower bound. By Theorem~\ref{theo-i-C(G)}, there exists a set $S \in \mathcal{C}l(G)$ such that 
$i(\mathtt{C}(G))=|S|+ |E(G-S)| + \delta(P_{G}(S))$.
Now, let $s = |S|$.  The following equalities follow from a counting argument on the number of edges in $G-S$.
\begin{equation}\label{eq-33}
\begin{split}
|E(G-S)| &= m - |E_G(S,S)| - |E_G(S, V(G) \setminus S)| \\[4pt]
&= m - \sum_{v \in S} |N_G(v)| + \frac{s(s-1)}{2}. 
\end{split}
\end{equation}

\noindent
From \eqref{eq-33} and the fact that $\sum_{v \in S} |N_G(v)| \leq s\Delta(G)$, it follows that 
\begin{equation*}\label{eq-4}
\begin{split}
i(\mathtt{C}(G))&=|S| + |E(G-S)| + \delta(P_{G}(S))\\[4pt] 
&\geq  s + m - \sum_{v \in S} |N_G(v)| + s(s-1)/2  \\[4pt] 
&\geq s + m - s\Delta(G) +s(s-1)/2 \\[7pt] 
&= m +(s^2-s(2\Delta(G)-1))/2.
\end{split}
\end{equation*}

\vspace{.15cm}

\noindent
In particular, we have that $i(\mathtt{C}(G))\geq m+f(s)$, where $f(s) =(s^2-s(2\Delta(G)-1))/2$. Since $G\not\cong K_n$, it follows that $1\leq s\leq \omega(G)\leq \Delta(G)$. Moreover, by Lemma \ref{lem-f}~\ref{lem-f2}, we have that $f$ is strictly decreasing on $[1,\Delta(G)-1]$. This implies that if $\omega(G)\leq \Delta(G)-1$, then $f(s)$ attains its minimum value at $s=\omega(G)$. Therefore, 
$$i(\mathtt{C}(G))\geq m+f(s)\geq m +f(\omega(G)) = m-\frac{\omega(G)(2\Delta(G)- \omega(G) - 1) )}{2},$$

\noindent
as desired. Finally, let us consider that $\omega(G)=\Delta(G)$. By Lemma \ref{lem-f}~\ref{lem-f1}, it follows that $f(\Delta(G)-1)=f(\Delta(G))=f(\omega(G))$. Applying again the fact that $f$ is strictly decreasing on $[1,\Delta(G)-1]$, it follows that $f(\Delta(G)-1)=\min\{f(1),\ldots, f(\Delta(G)-1)\}$. Then $f(s)$ attains its minimum value at $s=\Delta(G)-1$ or $s=\Delta(G)$. Therefore,
\begin{equation*}
\begin{split}
i(\mathtt{C}(G)) \geq m+\min\left\{f(\Delta(G)-1),f(\Delta(G))\right\}&= m +f(\omega(G))\\[5pt] 
&= m - \frac{\omega(G) (2\Delta(G) - \omega(G) - 1)}{2},
\end{split}
\end{equation*}
as desired. Therefore, the proof is complete.
\end{proof}

\noindent
An immediate consequence of the preceding theorem is the following corollary, which gives a closed formula for $i(\mathtt{C}(G))$ when $G$ is a $k$-regular graph.

\begin{corollary}
Let $G$ be a connected $k$-regular graph of order $n\geq 3$, distinct from a complete graph. Then
$$i(\mathtt{C}(G)) = \frac{nk+ \omega(G)^{2} - (2k-1) \omega(G)}{2}.$$
\end{corollary}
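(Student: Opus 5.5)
The plan is to apply Theorem~\ref{theo-lower-upper-bounds} directly. Since $G$ is connected, of order $n\geq 3$, and not complete, it has size $m\geq 2$ (a connected graph on at least three vertices has at least two edges), so the theorem's hypotheses hold. Because $G$ is $k$-regular, $\delta(G)=\Delta(G)=k$. The lower and upper bounds of Theorem~\ref{theo-lower-upper-bounds} then coincide, and we get
$$i(\mathtt{C}(G)) = m-\frac{\omega(G)(2k-\omega(G)-1)}{2}.$$

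Next we substitute the size of a $k$-regular graph. The handshake lemma gives $m=nk/2$, so
$$i(\mathtt{C}(G))=\frac{nk-2k\,\omega(G)+\omega(G)^2+\omega(G)}{2}=\frac{nk+\omega(G)^2-(2k-1)\omega(G)}{2},$$
which is the claimed formula.

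There is no real obstacle. The only points to check are the side conditions of Theorem~\ref{theo-lower-upper-bounds}, namely $m\geq 2$ and that $G$ is not complete, and both are immediate from the hypotheses.
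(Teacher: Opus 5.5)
Your proposal is correct and is essentially the paper's argument: the paper presents this corollary as an immediate consequence of Theorem~\ref{theo-lower-upper-bounds}. With $\delta(G)=\Delta(G)=k$ the two bounds coincide, and substituting $m=nk/2$ gives the formula. You also correctly check the side conditions: connectedness and $n\geq 3$ give $m\geq 2$, and $G$ is not complete by hypothesis.
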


\vspace{.2cm}

\noindent	
Next, we establish an interesting relationship between the independent domination numbers of $\mathtt{C}(G)$ and $\mathtt{C}(K_1+G)$. In particular, we prove that $i(\mathtt{C}(K_1+G)) = i(\mathtt{C}(G))+1.$ The following lemma will be useful.

\begin{lemma}\label{lem:K1G}
Let $G$ be a connected graph of order $n\geq 3$. Then there exists a set $S \in \mathcal{C}l(K_1+G)$ which satisfies the following conditions.
\begin{enumerate}[label={\rm(\roman*)}]
\item \label{lem-k11} $S\not\subseteq V(G)$.
\item \label{lem-k12} $i(\mathtt{C}(K_1+G)) = |S| + |E((K_1+G)-S)| + \delta(P_{K_1+G}(S))$.
\end{enumerate}
\end{lemma}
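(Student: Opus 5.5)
Let $H=K_1+G$ and let $u$ denote the vertex of $K_1$, which is adjacent to every vertex of $G$. Theorem~\ref{theo-i-C(G)} applies to $H$, which is connected of order at least $4$. So there is some $S_0\in\mathcal{C}l(H)$ attaining the minimum, that is, $i(\mathtt{C}(H))=|S_0|+|E(H-S_0)|+\delta(P_H(S_0))$. If $u\in S_0$ we are done with $S=S_0$. So we assume $S_0\subseteq V(G)$, and the plan is to replace it by $S=S_0\cup\{u\}$. This set is still a clique in $H$, because $u$ is adjacent to everything, and it satisfies \ref{lem-k11}. It then suffices to show that the value of $S$ is at most that of $S_0$; the reverse inequality is automatic by minimality.

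For the comparison, note that $H-S=(H-S_0)-u$. The edges lost are exactly those from $u$ to $V(G)\setminus S_0$, so
\[
|E(H-S)|=|E(H-S_0)|-(n-|S_0|),
\]
while $|S|=|S_0|+1$. Hence the value of $S$ equals the value of $S_0$ plus
\[
1-(n-|S_0|)+\delta(P_H(S))-\delta(P_H(S_0)).
\]
We need this increment to be $\le 0$. If $|S_0|\le n-2$, then $n-|S_0|\ge 2$, and since $\delta(P_H(S))\le 1$ the increment is at most $1-2+1=0$.

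The remaining case is $|S_0|\ge n-1$, which I expect to be the main obstacle. Here we also use that $P_H(S_0)$ must be empty. Indeed, a vertex $v\in P_H(S_0)$ would satisfy $N_H(v)=S_0\not\ni u$, but if $v\ne u$ then $v$ is adjacent to $u$, and if $v=u$ then $N_H(u)=V(G)\ne S_0$ unless $S_0=V(G)$. So we split as follows.

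If $S_0=V(G)$, then $G$ is complete and $u\in P_H(S_0)$, giving value $n+0+1$. Then $S=V(H)$ has value $n+1+0+0$, which is equal, so $S$ works.

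If $|S_0|=n-1$, let $w$ be the unique vertex of $G$ outside $S_0$. Then $E(H-S_0)=\{uw\}$ and $P_H(S_0)=\emptyset$, so $S_0$ has value $n$. The set $S=S_0\cup\{u\}$ leaves only $w$, with $E(H-S)=\emptyset$, and $P_H(S)\ni w$ exactly when $N_H(w)=S$. That can force value $n+1$, so instead I would take the alternative clique $S'=(S_0\setminus\{x\})\cup\{u\}$ for a suitable $x$. Choose $x\in S_0$ not adjacent to $w$ if such a vertex exists; otherwise $N_H(w)=S$, so $G$ is complete, and we are back in the previous subcase with $V(G)$ and $S=V(H)$. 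With this choice, $S'$ has size $n-1$, leaves the vertices $\{x,w\}$ nonadjacent so $|E(H-S')|=0$, and $P_H(S')$ is empty since $x$ is adjacent to $u$ but not to $w$, and similarly for $w$ after checking against $x$. Its value is therefore $n-1<n$, which contradicts the minimality of $S_0$. So this subcase either cannot occur or reduces to the complete case. The remaining verifications of $\delta(P_H(\cdot))$ in these small cases are routine.
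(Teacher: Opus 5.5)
Your plan follows the paper's: add $u$ when $|S_0|\le n-2$, exchange a vertex of $S_0$ for $u$ when $|S_0|=n-1$, and treat $S_0=V(G)$ separately. The cases $|S_0|\le n-2$ and $S_0=V(G)$ are handled correctly; the trouble is the case $|S_0|=n-1$. In the branch where some $x\in S_0$ is non-adjacent to $w$, your claim $P_H(S')=\emptyset$ is false. Since $S_0$ is a clique and $x$ is adjacent to $u$ but not to $w$, we get $N_H(x)=(S_0\setminus\{x\})\cup\{u\}=S'$ exactly. So $x\in P_H(S')$, and $S'$ has value $(n-1)+0+1=n$, not $n-1$. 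There is no contradiction, and the subcase genuinely occurs: for $G=P_3$ and $S_0$ an edge of $G$, $S_0$ attains $i(\mathtt{C}(K_1+P_3))=3$. This branch is salvageable, since a tie is all you need. In fact $S_0\cup\{u\}$ already ties here, because $x\notin N_H(w)$ forces $P_H(S_0\cup\{u\})=\emptyset$.

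The other branch is a real gap. If every vertex of $S_0$ is adjacent to $w$, then $G\cong K_n$, but you are not back in the previous subcase. Here $S_0=V(G)\setminus\{w\}$ has value $n$, whereas $V(G)$ and $V(H)$ both have value $n+1$, so $V(H)$ does not attain the minimum. This case is not vacuous: in $K_{n+1}$ the minimizing cliques are exactly the $(n-1)$-subsets, so for $G=K_3$ and $S_0$ two vertices of $G$ your argument outputs no valid $S$.

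This is exactly where the exchange is needed, with $x$ chosen \emph{adjacent} to $w$, as in the paper. Such an $x\in S_0$ exists because $G$ is connected and every neighbour of $w$ in $G$ lies in $S_0$. Then $H-S'$ consists of the single edge $xw$, and $P_H(S')=\emptyset$ since each of $x,w$ has a neighbour outside $S'$. So $S'$ has value $(n-1)+1+0=n$, equal to that of $S_0$. With this choice the case $|S_0|=n-1$ is settled uniformly, without splitting on whether $G$ is complete.
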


\begin{proof}
Let $V(K_1+G)=\{v_1,\ldots,v_n,v_{n+1}\}$ such that $V(G)=\{v_1,\ldots,v_n\}$ and $V(K_1) = \{v_{n+1}\}$. 
By Theorem~\ref{theo-i-C(G)}, there exists a set $S \in \mathcal{C}l(K_1+G)$ such that 
\begin{equation*}
i(\mathtt{C}(K_1+G)) = |S| + |E((K_1+G)-S)| + \delta(P_{K_1+G}(S)).
\end{equation*}
Hence, \ref{lem-k12} holds. Without loss of generality, assume that $|S\cap \{v_{n+1}\}|$ is maximum among all sets $Y \in \mathcal{C}l(K_1+G)$ with $i(\mathtt{C}(K_1+G)) = |Y| + |E((K_1+G)-Y)| + \delta(P_{K_1+G}(Y))$. Suppose that $v_{n+1} \notin S$.
Next, let us consider the following three complementary cases.

\vspace{.2cm}

\noindent
Case 1: $\delta(P_{K_1+G}(S))=1$. In this case, it follows that $S=V(G)$ and $P_{K_1+G}(S)=\{v_{n+1}\}$. Therefore, $K_1+G\cong K_{n+1}$.  Let $S'=(S\setminus \{v_1\}) \cup \{v_{n+1}\}$. Observe that $|S'|=|S|$, $|E((K_1+G)-S')|=|E((K_1+G)-S)|$ and $\delta(P_{K_1+G}(S'))=1$. Hence, 
\begin{equation}\label{eq-lem-K1}
\begin{split}
i(\mathtt{C}(K_1+G)) &= |S| + |E((K_1+G)-S)| + \delta(P_{K_1+G}(S))\\[4pt]
&=|S'| + |E((K_1+G)-S')| + \delta(P_{K_1+G}(S')).
\end{split}
\end{equation}
This is a contradiction, since $S'\in \mathcal{C}l(K_1+G)$ and $|S'\cap \{v_{n+1}\}| > |S\cap \{v_{n+1}\}|$.  

\vspace{.25cm}

\noindent
Case 2: $\delta(P_{K_1+G}(S))=0$ and $|S| = n-1$. Without loss of generality, suppose that $V(G)\setminus S=\{v_1\}$. Since $G$ is a connected graph, there exists a vertex $v_k \in S$ such that $v_1v_k \in E(G)$. Let $S'=(S\setminus \{v_k\}) \cup \{v_{n+1}\}$. Observe that $|S'|=|S|$, $|E((K_1+G)-S')|=|E((K_1+G)-S)|$ and $\delta(P_{K_1+G}(S'))=0$. Hence, the equality in \eqref{eq-lem-K1} holds. This is a contradiction, since $S'\in \mathcal{C}l(K_1+G)$ and $|S'\cap \{v_{n+1}\}| > |S\cap \{v_{n+1}\}|$.

\vspace{.25cm}

\noindent
Case 3: $\delta(P_{K_1+G}(S))=0$ and $|S|\leq n-2$. Let $S' = S\cup \{v_{n+1}\}$. Observe that $|S|=|S'|-1$ and $|E((K_1+ G) - S)| = |E((K_1+ G) - S')|+ n-|S|$. Hence,
\begin{equation*}
\begin{split}
i(\mathtt{C}(K_1+ G))&= |S| + |E((K_1+G)-S)| + \delta(P_{K_1+G}(S))\\[4pt]
&=|S'|-1 + |E((K_1+ G) - S')|+n-|S| \\[4pt]
&\geq |S'| + |E((K_1+G)-S')| + \delta(P_{K_1+G}(S')).
\end{split}
\end{equation*}
Since $S'\in \mathcal{C}l(K_1+G)$, it follows from Theorem~\ref{theo-i-C(G)} that $i(\mathtt{C}(K_1+ G))=|S'| + |E((K_1+G)-S')| + \delta(P_{K_1+G}(S'))$. This is a contradiction, since $|S'\cap \{v_{n+1}\}| > |S\cap \{v_{n+1}\}|$.

\vspace{.25cm}

\noindent
As shown in the previous cases, we obtain a contradiction. Therefore, $v_{n+1} \in S$, that is, $S\not\subseteq V(G)$, which completes the proof.
\end{proof}

\begin{theorem}\label{theo:iCK1+G}
Let $G$ be any connected graph of order $n \geq 3$. Then
$$i(\mathtt{C}(K_1+G)) = i(\mathtt{C}(G))+1.$$
\end{theorem}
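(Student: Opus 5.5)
The plan is to compare, via Theorem~\ref{theo-i-C(G)}, the cliques of $G$ with the cliques of $K_1+G$ that contain the new vertex. Lemma~\ref{lem:K1G} lets us restrict attention to the latter.

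Label the vertices as in the proof of Lemma~\ref{lem:K1G}, so that $V(K_1)=\{v_{n+1}\}$. For a clique $X\in\mathcal{C}l(G)$, possibly empty, let $X'=X\cup\{v_{n+1}\}$. Then $X'\in\mathcal{C}l(K_1+G)$, and every clique of $K_1+G$ containing $v_{n+1}$ arises this way. I will verify three identities for nonempty $X$:
\begin{itemize}
\item[(a)] $|X'|=|X|+1$;
\item[(b)] $(K_1+G)-X' = G-X$, so $|E((K_1+G)-X')|=|E(G-X)|$;
\item[(c)] $P_{K_1+G}(X')=P_G(X)$.
\end{itemize}
For (c), a vertex $v_i\in V(G)\setminus X$ has $N_{K_1+G}(v_i)=N_G(v_i)\cup\{v_{n+1}\}$. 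This equals $X'$ exactly when $N_G(v_i)=X$. The vertex $v_{n+1}$ itself lies in $X'$, so it cannot belong to $P_{K_1+G}(X')$. Hence $\delta(P_{K_1+G}(X'))=\delta(P_G(X))$.

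Upper bound. Take $X\in\mathcal{C}l(G)$ attaining the minimum in Theorem~\ref{theo-i-C(G)} for $G$; any clique is nonempty. Apply Theorem~\ref{theo-i-C(G)} to $K_1+G$ (a connected graph of order $n+1\geq 4$) with the clique $X'$. By (a)--(c), $i(\mathtt{C}(K_1+G))\le i(\mathtt{C}(G))+1$.

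Lower bound. By Lemma~\ref{lem:K1G}, there is an optimal clique $S$ for $K_1+G$ with $v_{n+1}\in S$. Write $S=X\cup\{v_{n+1}\}$, where $X=S\setminus\{v_{n+1}\}$ is a clique of $G$ or empty.
\begin{itemize}
\item If $X\neq\emptyset$, then (a)--(c) give $i(\mathtt{C}(K_1+G))=1+\big(|X|+|E(G-X)|+\delta(P_G(X))\big)\ge 1+i(\mathtt{C}(G))$, by Theorem~\ref{theo-i-C(G)} applied to $G$.
\item If $X=\emptyset$, then $S=\{v_{n+1}\}$ and $(K_1+G)-S=G$, so the value is $1+m+\delta(P_{K_1+G}(\{v_{n+1}\}))$. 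Here $P_{K_1+G}(\{v_{n+1}\})$ consists of the isolated vertices of $G$, and there are none since $G$ is connected with $n\ge 3$. So the value is $1+m$. The earlier bound $i(\mathtt{C}(G))\le m$ then gives $i(\mathtt{C}(K_1+G))=1+m\ge 1+i(\mathtt{C}(G))$.
\end{itemize}

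The main point needing care is the degenerate case $X=\emptyset$, which is not covered by the correspondence (a)--(c). It is settled by the bound $i(\mathtt{C}(G))\le m$. The identification (c) of the private sets is routine but must be checked, since $v_{n+1}$ is adjacent to everything. Combining the upper and lower bounds yields the equality.
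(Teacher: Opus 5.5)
Your proof is correct and follows the paper's argument. The upper bound adjoins $v_{n+1}$ to an optimal clique of $G$, and the lower bound deletes $v_{n+1}$ from the optimal clique of $K_1+G$ given by Lemma~\ref{lem:K1G}, using the same three identities. Your separate handling of $S=\{v_{n+1}\}$ via $i(\mathtt{C}(G))\le m$ fills a small gap in the paper, which asserts $Z\setminus\{v_{n+1}\}\in\mathcal{C}l(G)$ even though this set can be empty; $\{v_{n+1}\}$ can indeed be optimal, for instance when $G\cong P_3$.
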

	
\begin{proof}
Let $V(K_1+G)=\{v_1,\ldots,v_n,v_{n+1}\}$ such that $V(G)=\{v_1,\ldots,v_n\}$ and $V(K_1) = \{v_{n+1}\}$. 
By Theorem~\ref{theo-i-C(G)}, there exists $Y\in\mathcal{C}l(G)$ such that $i (\mathtt{C}(G)) = |Y|+|E(G-Y)|+\delta(P_{G}(Y))$. Let $Y' = Y\cup \{v_{n+1}\}$. Note that $|Y'| = |Y| + 1$, $|E((K_1+G)-Y')| = |E(G-Y)|$ and $\delta(P_{K_1 +G}(Y')) = \delta(P_{G}(Y))$. Since $Y' \in \mathcal{C}l (K_1 +G)$, it follows from Theorem~\ref{theo-i-C(G)} that
\begin{equation*}
\begin{split}
i(\mathtt{C}(K_1+G)) 
&=\min\{|X|+|E((K_1+G)-X)|+\delta(P_{K_1 + G}(X)): X \in\mathcal{C}l(K_1+G)\} \\[4pt]
&\leq |Y'|+|E((K_1+G)-Y')|+\delta(P_{K_1 + G}(Y')) \\[4pt]
&= |Y|+1+|E(G-Y)|+\delta(P_{G}(Y))\\[4pt]
& = i(\mathtt{C}(G))+1.
\end{split}
\end{equation*}
We only need to prove that $i(\mathtt{C}(G)) + 1\leq i(\mathtt{C}(K_1+ G))$. By Lemma~\ref{lem:K1G}, there exists $Z \in \mathcal{C}l(K_1+G)$ such that $i(\mathtt{C}(K_1+G))  = |Z| + |E((K_1+G) - Z)| + \delta(P_{K_1+G}(Z))$ and $v_{n+1}\in Z$. Let $Z' = Z\setminus \{v_{n+1}\}$. 
Note that $|Z'|=|Z|-1$, $|E(G-Z')| = |E((K_1+ G) - Z)|$ and $\delta(P_{G}(Z'))=\delta(P_{K_1 + G}(Z))$. Since $Z' \in \mathcal{C}l(G)$, it follows from Theorem~\ref{theo-i-C(G)} that
\begin{equation*}
\begin{split}
i(\mathtt{C}(G))+1 
&=\min\{|X|+|E(G-X)|+\delta(P_{G}(X)): X \in\mathcal{C}l(G)\}+1 \\[4pt]
&\leq |Z'|+|E(G-Z')|+\delta(P_{G}(Z'))+1 \\[4pt]
&= |Z|+|E((K_1+G)-Z)|+\delta(P_{K_1+ G}(Z))\\[4pt]
& = i(\mathtt{C}(K_1+G)).
\end{split}
\end{equation*}
Therefore, $i(\mathtt{C}(K_1+ G))=i(\mathtt{C}(G)) + 1$, which completes the proof.
\end{proof}
	
\noindent
The previous theorem allows us to compute the exact value of $i(\mathtt{C}(G))$ when $G$ is a wheel or a complete graph.

\begin{proposition}
Let $n\geq 4$ be an integer. Then
$$i(\mathtt{C}(K_n)) =i(\mathtt{C}(W_n))= n-1.$$
\end{proposition}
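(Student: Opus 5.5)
The plan is to use Theorem~\ref{theo:iCK1+G} together with two small base cases. Note that $K_n\cong K_1+K_{n-1}$ and $W_n\cong K_1+C_{n-1}$. Here $W_n$ has order $n$, with a hub joined to a cycle on $n-1$ vertices.

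For the wheel, take $n\geq 4$. The rim $C_{n-1}$ is connected and has order $n-1\geq 3$, so Theorem~\ref{theo:iCK1+G} gives $i(\mathtt{C}(W_n))=i(\mathtt{C}(C_{n-1}))+1$. It then remains to compute $i(\mathtt{C}(C_{n-1}))=n-2$.
\begin{itemize}
\item If $n-1\geq 4$, this is Corollary~\ref{cor-Cn}\ref{cor-2}.
\item If $n-1=3$, then $C_3\cong K_3$, and the part of the previous theorem characterizing $i(\mathtt{C}(G))=2$ gives $i(\mathtt{C}(C_3))=2=n-2$.
\end{itemize}
Hence $i(\mathtt{C}(W_n))=n-1$. (For $n=4$ we have $W_4\cong K_4$, so the two claims coincide there.)

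For the complete graph, I will induct on $n$ using $K_n=K_1+K_{n-1}$. Since $K_{n-1}$ must have order at least $3$, Theorem~\ref{theo:iCK1+G} applies for $n\geq 4$ and yields $i(\mathtt{C}(K_n))=i(\mathtt{C}(K_{n-1}))+1$. The base case is $i(\mathtt{C}(K_3))=i(\mathtt{C}(C_3))=2$, as noted above. Induction then gives $i(\mathtt{C}(K_n))=2+(n-3)=n-1$.

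As an independent check, Theorem~\ref{theo-i-C(G)} can be applied directly to $K_n$. For a clique $X$ with $|X|=s$, we have $|E(K_n-X)|=\binom{n-s}{2}$, and $P_{K_n}(X)\neq\emptyset$ exactly when $s=n-1$. The quantity $s+\binom{n-s}{2}+\delta(P_{K_n}(X))$ takes the value $n-1$ at $s=n-2$ and the value $n$ at both $s=n-1$ and $s=n$. For smaller $s$ it is at least $n-1$, which follows from $s+\binom{n-s}{2}\geq n-1$ whenever $n-s\geq 2$.

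The only delicate point is making sure the order hypothesis $n\geq 3$ of Theorem~\ref{theo:iCK1+G} is met at the base, namely $C_3=K_3$, and that $C_3$ is handled by the characterization rather than by Corollary~\ref{cor-Cn}, which requires $n\geq 4$.
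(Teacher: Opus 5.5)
Your proof is correct and follows the same route as the paper: iterate Theorem~\ref{theo:iCK1+G} from the base case $i(\mathtt{C}(K_3))=2$, and use $W_n\cong K_1+C_{n-1}$ together with Corollary~\ref{cor-Cn}~\ref{cor-2}. Your separate handling of $n=4$ is slightly more careful than the paper, which applies Corollary~\ref{cor-Cn} to $C_{n-1}$ without noting that $C_3$ lies outside its stated range $n\geq 4$.
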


\begin{proof}
It is easy to check that $i(\mathtt{C}(K_3))=2$. For every $r\geq 4$, we have that  $K_r\cong K_1+K_{r-1}$. By applying Theorem~\ref{theo:iCK1+G} consecutively $n-3$ times, we obtain that  $i(\mathtt{C}(K_n))= n-1$, as desired. Now, observe that $W_n\cong K_1+C_{n-1}$. By Theorem~\ref{theo:iCK1+G} and Corollary~\ref{cor-Cn}~\ref{cor-2} it follows that $i(\mathtt{C}(W_n))=i(\mathtt{C}(K_1+C_{n-1}))=i(\mathtt{C}(C_{n-1}))+1 =n-1.$ Therefore, the proof is complete.
\end{proof}

\noindent
We conclude this section by providing a closed formula for the independent domination number of $\mathtt{C}^2(G)$. We first establish the following useful result.

\begin{proposition}\label{prop-clique}
For any nontrivial connected graph $G$ which is not a complete graph,
$$\omega(\mathtt{C}(G)) = \alpha(G).$$
\end{proposition}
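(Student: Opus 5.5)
We prove the equality by establishing both inequalities, after first describing the structure of cliques in $\mathtt{C}(G)$. Recall that in $\mathtt{C}(G)$ two original vertices $v_i,v_j$ are adjacent exactly when $v_iv_j\notin E(G)$. Every subdivision vertex $v^{i,j}$ has exactly two neighbours, $v_i$ and $v_j$, and these two are \emph{not} adjacent in $\mathtt{C}(G)$ because $v_iv_j\in E(G)$. Also, $V_E(G)$ is an independent set of $\mathtt{C}(G)$.

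For the lower bound $\omega(\mathtt{C}(G))\geq \alpha(G)$, take an $\alpha(G)$-set $I$. Since $I$ is independent in $G$, every pair of its vertices is adjacent in $\mathtt{C}(G)$, so $I\in\mathcal{C}l(\mathtt{C}(G))$. Hence $\omega(\mathtt{C}(G))\geq |I|=\alpha(G)$.

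For the upper bound, let $J$ be a clique of $\mathtt{C}(G)$. We split into two cases according to whether $J$ contains a subdivision vertex.
\begin{enumerate}[label={\rm(\alph*)}]
\item If $J\subseteq V(G)$, then its vertices are pairwise nonadjacent in $G$. So $J$ is independent in $G$, and $|J|\leq\alpha(G)$.
\item If $J$ contains some $v^{i,j}$, then every other vertex of $J$ lies in $N_{\mathtt{C}(G)}(v^{i,j})=\{v_i,v_j\}$. Since $v_i$ and $v_j$ are nonadjacent in $\mathtt{C}(G)$, $J$ can contain at most one of them, so $|J|\leq 2$.
\end{enumerate}
Therefore $\omega(\mathtt{C}(G))\leq\max\{\alpha(G),2\}$.

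It remains to show $\alpha(G)\geq 2$, and this is the only place where the hypothesis that $G$ is not complete enters. Since $G$ is not complete, it has two nonadjacent vertices, which form an independent set of size $2$. Hence $\max\{\alpha(G),2\}=\alpha(G)$, and the two inequalities give $\omega(\mathtt{C}(G))=\alpha(G)$.

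The hypothesis is genuinely needed. For $G\cong K_n$ we have $\alpha(G)=1$, but any edge $v_iv^{i,j}$ of $\mathtt{C}(K_n)$ is a clique of order two, so $\omega(\mathtt{C}(K_n))=2\neq\alpha(K_n)$.
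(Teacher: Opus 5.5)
Your proof is correct and follows essentially the same route as the paper. The lower bound comes from an $\alpha(G)$-set forming a clique in $\mathtt{C}(G)$, and the upper bound from the fact that a clique containing a subdivision vertex has order at most $2$, together with $\alpha(G)\geq 2$ from non-completeness. The only difference is presentation: the paper takes an $\omega(\mathtt{C}(G))$-set maximizing $|S\cap V(G)|$ and replaces it by a nonadjacent pair $\{v_i,v_j\}$ if it meets $V_E(G)$, whereas you bound every clique directly, which is slightly cleaner.
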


\begin{proof}
First, we proceed to prove that $\omega(\mathtt{C}(G)) \leq \alpha(G)$. Let $S$ be an $\omega(\mathtt{C}(G))$-set such that $|S \cap V(G)|$ is maximum. Suppose that $S \cap V_E(G) \neq \emptyset$. Then $|S \cap V_E(G)|=1$, which implies that $|S| = 2$. Since $G$ is not a complete graph, there exist two distinct vertices $v_i,v_j \in V(G)$ such that $v_iv_j \notin E(G)$. Hence, $\{v_i,v_j\}$ is an $\omega(\mathtt{C}(G))$-set, and $|\{v_i,v_j\}\cap V(G)| > |S\cap V(G)|$, a contradiction. Therefore, $S \subseteq V(G)$, and thus $S$ is an independent set of $G$. Consequently, $\omega(\mathtt{C}(G)) = |S| \leq \alpha(G)$. Finally, we proceed to prove that $\alpha(G) \leq \omega(\mathtt{C}(G))$. Let $I$ be an $\alpha(G)$-set. It is straightforward to verify that $I \in \mathcal{C}l(\mathtt{C}(G))$. Therefore, $\alpha(G) = |I| \leq \omega(\mathtt{C}(G))$, which completes the proof.
\end{proof}

\begin{theorem}
Let $G$ be a connected graph of order $n\geq 3$ and size $m$. Then
$$i (\mathtt{C}^{2}(G)) = m + \frac{n(n-1)}{2} + \frac{\alpha(G)^{2} - \alpha(G) (2n-3)}{2}.$$
\end{theorem}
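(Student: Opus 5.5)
The plan is to apply Theorem~\ref{theo-lower-upper-bounds} to the graph $H=\mathtt{C}(G)$, after checking that $H$ is regular. Then the lower and upper bounds coincide and give a closed formula. We use Proposition~\ref{prop-clique} to replace $\omega(H)$ by $\alpha(G)$.

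First we compute the parameters of $H$. It has order $n+m$. Its size is $2m+\left(\frac{n(n-1)}{2}-m\right)=m+\frac{n(n-1)}{2}$, since each edge of $G$ contributes two edges $v_iv^{i,j},v_jv^{i,j}$ and each non-edge of $G$ becomes an edge. For degrees, a vertex $v_i\in V(G)$ has degree $|N_G(v_i)|+(n-1-|N_G(v_i)|)=n-1$ in $H$, while every subdivision vertex $v^{i,j}$ has degree $2$. So $H$ is regular only when $n=3$, and in general it is not.

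Since $H$ is not regular, we cannot simply invoke the regular corollary. Instead we work directly with Theorem~\ref{theo-i-C(G)} applied to $H$:
\[
i(\mathtt{C}^2(G))=\min\{|X|+|E(H-X)|+\delta(P_H(X)) : X\in\mathcal{C}l(H)\}.
\]
By the proof of Proposition~\ref{prop-clique}, a clique of $H$ is either an independent set of $G$, or a set of size at most $2$ of the form $\{v^{i,j}\}$ or $\{v_i,v^{i,j}\}$.

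For an independent set $X\subseteq V(G)$ of $G$ with $|X|=s$, every vertex of $X$ has degree $n-1$ in $H$, and $X$ spans $\binom{s}{2}$ edges. Hence
\[
|E(H-X)|=m+\tfrac{n(n-1)}{2}-s(n-1)+\tfrac{s(s-1)}{2},
\]
and the quantity becomes $m+\frac{n(n-1)}{2}+\frac{s^2-s(2n-3)}{2}$, provided $\delta(P_H(X))=0$. A vertex with $N_H(v)=X$ would have to be either a subdivision vertex with $X=\{v_i,v_j\}$ and $v_iv_j\in E(G)$, which is impossible since $X$ is independent, or a vertex of $V(G)$ of degree $n-1=|X|$, which is impossible since $G$ is connected and so $\alpha(G)\le n-1$ with strictness forcing an edge. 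In any case, we should double-check the latter for the cases $s=n-1$ (the star). The function $g(s)=s^2-s(2n-3)$ is exactly $2f(s)$ from Lemma~\ref{lem-f} with $a=n-1$, so it is strictly decreasing on $[1,n-2]$ with $f(n-1)=f(n-2)$. Since $s\le\alpha(G)\le n-1$, the minimum over independent sets is attained at $s=\alpha(G)$, which yields the claimed value.

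It remains to show that cliques of $H$ meeting $V_E(G)$ do no better. For $X=\{v^{i,j}\}$ the value is at least $1+(\text{size of }H)-2$. For $X=\{v_i,v^{i,j}\}$ the value is $2+\text{size}-(n-1)-2+1$, plus possibly $1$. We compare these with the $s=1$ value $1+\text{size}-(n-1)$, which they exceed or equal, and the $s=1$ value is at least the $s=\alpha(G)$ value by monotonicity.

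The main obstacle is precisely this edge-case bookkeeping: verifying $\delta(P_H(X))=0$ for the optimal independent set, especially when $\alpha(G)=n-1$, and checking the mixed cliques of size $2$. I would handle the star case $G\cong K_{1,n-1}$ separately, where $X$ is the leaf set and its common $H$-neighbourhood must be checked directly.
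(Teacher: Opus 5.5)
Your proof is correct, but it gets the lower bound by a different route from the paper.

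\textbf{How the paper argues.} Its upper bound is the same as yours: a maximum independent set $I$ of $G$ is a clique of $\mathtt{C}(G)$ whose vertices all have degree $n-1$. For the lower bound, it applies the lower bound of Theorem~\ref{theo-lower-upper-bounds} to $H=\mathtt{C}(G)$, with $\Delta(H)=n-1$ and $\omega(H)=\alpha(G)$ taken from Proposition~\ref{prop-clique}. That bound uses only $\Delta(H)$, so the non-regularity of $H$ that made you drop your initial plan is no obstacle there. Proposition~\ref{prop-clique} requires $G\not\cong K_n$; indeed $\omega(\mathtt{C}(K_n))=2\neq\alpha(K_n)$. The paper therefore treats $G\cong K_n$ separately, by a case analysis over the cliques $\{v^{i,j}\}$, $\{v_i\}$, $\{v_i,v^{i,j}\}$. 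That analysis is exactly your mixed-clique computation specialized to $K_n$.

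\textbf{What your route buys.} You classify all cliques of $\mathtt{C}(G)$ as independent sets of $G$ or sets $\{v^{i,j}\}$, $\{v_i,v^{i,j}\}$, and evaluate the formula of Theorem~\ref{theo-i-C(G)} on each type. This is self-contained and uniform in $G$: there is no complete-graph case. It also does not rely on $I$ being an $\omega(\mathtt{C}(G))$-set, which the paper's justification of $\delta(P_{\mathtt{C}(G)}(I))=0$ tacitly uses and which fails for $K_n$. The cost is redoing by hand the edge counts that Theorem~\ref{theo-lower-upper-bounds} packages. Lemma~\ref{lem-f} with $a=n-1$ does the same job in both arguments. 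Your counts are right: the mixed cliques give at least $|E(H)|-n+2=|E(H)|+f(1)$, using $n\ge 3$ for $\{v^{i,j}\}$.

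\textbf{The loose end you flag.} It needs no separate star case. Suppose $v\notin X$ satisfied $N_H(v)=X$ for a maximum independent set $X$ of $G$. If $v=v^{i,j}$, then $X=\{v_i,v_j\}$ would contain an edge of $G$. If $v\in V(G)$, then $X\cup\{v\}$ would be a larger independent set of $G$. Both are impossible, so $\delta(P_H(X))=0$ always.
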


\begin{proof}
First, we observe that 
\begin{equation}\label{eq-C2-A1}
|E(\mathtt{C}(G))| = 2|E(G)| + |E(\overline{G})| = m + \frac{n(n-1)}{2}.
\end{equation}
Let $I$ be an $\alpha(G)$-set. It is straightforward to verify that $I \in \mathcal{C}l (\mathtt{C}(G))$, and by Proposition~\ref{prop-clique} we have that $|I|=\omega(\mathtt{C}(G))$. Consequently, $I$ is also an $\omega(\mathtt{C}(G))$-set.  The following equalities follow from a counting argument on the number of edges in $\mathtt{C}(G)-I$, together with the fact that $\sum_{v\in I} |N_{\mathtt{C}(G)} (v)| = \alpha(G) (n-1)$ and \eqref{eq-C2-A1}.
\begin{equation}\label{eq-C2-A2}
\begin{split}
|E(\mathtt{C}(G)-I)| &= |E(\mathtt{C}(G))| -  |E_{\mathtt{C}(G)} (I,I)| - |E_{\mathtt{C}(G)} (I, V(\mathtt{C}(G)) \setminus I)| \\[5pt]
&= m + \frac{n(n-1)}{2} - \sum_{v\in I} |N_{\mathtt{C}(G)} (v)| + \frac{\alpha(G)(\alpha(G)-1)}{2} \\[5pt]
&= m + \frac{n(n-1)}{2} + \frac{\alpha(G)^{2} -\alpha(G) (2n-1)}{2}.
\end{split}
\end{equation}
Since $I$ is an $\omega(\mathtt{C}(G))$-set, it follows that $\delta(P_{\mathtt{C}(G)}(I)) = 0$. From Theorem~\ref{theo-i-C(G)} and \eqref{eq-C2-A2} we obtain that
\begin{equation}\label{eq-C2-up}
\begin{split}
i (\mathtt{C}^{2}(G)) &\leq |I| + |E(\mathtt{C}(G)-I)| + \delta(P_{\mathtt{C}(G)}(I)) \\[5pt]
&= \alpha (G) + m + \frac{n(n-1)}{2} + \frac{\alpha(G)^{2} -\alpha(G) (2n-1)}{2}\\[5pt]
&= m + \frac{n(n-1)}{2} + \frac{\alpha(G)^{2} -\alpha(G) (2n-3)}{2}.
\end{split}
\end{equation}

\vspace{.2cm}

\noindent
Finally, we prove that \eqref{eq-C2-up} consists of a chain of equalities. To this end, we consider the following two cases.

\vspace{.2cm}

\noindent
Case 1: $G\not\cong K_n$. By Proposition \ref{prop-clique} we have that $\omega (\mathtt{C}(G)) = \alpha(G)$. By the lower bound given in Theorem~\ref{theo-lower-upper-bounds}, the fact that $\Delta(\mathtt{C}(G)) = n-1$ and \eqref{eq-C2-A1} we obtain that
\begin{equation}\label{eq-C2-low}
\begin{split}
i (\mathtt{C}^{2}(G)) &\geq |E(\mathtt{C}(G))| - \frac{\omega(\mathtt{C}(G))(2\Delta(\mathtt{C}(G))- \omega(\mathtt{C}(G)) -1 )} {2} \\[6pt]
&= m + \frac{n(n-1)}{2} + \frac{\alpha(G)^{2} - \alpha(G) (2n-3)}{2}.
\end{split}
\end{equation}
Therefore, from \eqref{eq-C2-up} and \eqref{eq-C2-low} it follows that
$$i (\mathtt{C}^{2}(G)) = m + \frac{n(n-1)}{2} + \frac{\alpha(G)^{2} - \alpha(G) (2n-3)}{2}.$$

\vspace{.2cm}

\noindent
Case 2: $G \cong K_n$. By Theorem~\ref{theo-i-C(G)}, there exists a set $S \in \mathcal{C}l (\mathtt{C}(G))$ such that 
$i(\mathtt{C}^{2}(G)) = |S| + |E(\mathtt{C}(G)-S)| + \delta(P_{\mathtt{C}(G)}(S))$. It is straightforward to verify that $|S|\leq 2$. Since $n\geq 3$, it follows that $\delta(P_{\mathtt{C}(G)}(S)) = 0$. Hence,
\begin{equation}\label{eq-C2-A3}
i(\mathtt{C}^{2}(G)) = |S| + |E(\mathtt{C}(G)-S)|.
\end{equation}
Next, we consider the following three complementary subcases.

\vspace{.2cm}
		
\noindent
Subcase 2.1: $|S| = 1$ and $S \subseteq V_E(G)$. Observe that $|E(\mathtt{C}(G)-S)| = n(n-1) - 2$. Substituting this equality into \eqref{eq-C2-A3}, we obtain that $i(\mathtt{C}^{2}(G))= n(n-1)-1$.
If $n\geq 4$, then we obtain a contradiction to \eqref{eq-C2-up}. Hence, $n=3$ and consequently, $$i(\mathtt{C}^{2}(G)) = 5 = m + \frac{n(n-1)}{2} + \frac{\alpha(G)^{2} -\alpha(G) (2n-3)}{2}.$$ 

\vspace{.2cm}
	
\noindent
Subcase 2.2: $|S| = 1$ and $S \subseteq V(G)$. Observe that $|E(\mathtt{C}(G)-S)| = (n-1)^{2}$. Substituting this equality into \eqref{eq-C2-A3}, we obtain that
\begin{equation*}
i(\mathtt{C}^{2}(G)) = 1 + (n-1)^{2}=m + \frac{n(n-1)}{2} + \frac{\alpha(G)^{2} -\alpha(G) (2n-3)}{2}.
\end{equation*}

\vspace{.2cm}

\noindent
Subcase 2.3: $|S| = 2$. Observe that $|E(\mathtt{C}(G)-S)| = (n-1)^{2} - 1$. Substituting this equality into \eqref{eq-C2-A3}, we obtain that
\begin{equation*}
i(\mathtt{C}^{2}(G)) = 1 + (n-1)^{2}=   m + \frac{n(n-1)}{2} + \frac{\alpha(G)^{2} -\alpha(G) (2n-3)}{2}
\end{equation*}

\vspace{.25cm}

\noindent
From the previous two cases, we conclude that $i(\mathtt{C}^{2}(G)) = m + \frac{n(n-1)}{2} + \frac{\alpha(G)^{2} -\alpha(G) (2n-3)}{2}$, which completes the proof.
\end{proof}

\section*{Declarations}

\noindent
\textbf{Data availability}: Not applicable.

\vspace{.2cm}

\noindent
\textbf{Conflict of interest}: The authors declare no conflict of interest.

\end{document}